\theoremstyle{plain}
\newtheorem{thm}{Theorem}[section]
\newtheorem*{thm*}{Theorem}
\newtheorem{prop}{Proposition}[section]
\newtheorem*{prop*}{Proposition}
\newtheorem*{cor*}{Corollary}
\newtheorem{lem}{Lemma}[section]
\newtheorem*{lem*}{Lemma}
\theoremstyle{definition}
\newtheorem*{defn*}{Definition}
\newtheorem*{exmps*}{Examples}
\newtheorem*{exmp*}{Example}
\newtheorem*{exerc*}{Exercise}
\newtheorem{rems}{Remarks}[section]
\newtheorem*{rems*}{Remarks}
\newtheorem*{rem*}{Remark}
\newcommand{\N}{{\mathbb N}}
\newcommand{\Z}{{\mathbb Z}}
\newcommand{\R}{{\mathbb R}}
\newcommand{\C}{{\mathbb C}}
\newcommand{\F}{{\mathbb F}}
\renewcommand{\bar}{\overline}
\numberwithin{equation}{section}
\DeclareMathOperator{\Rep}{Re\ignorespaces}
\DeclareMathOperator{\Imp}{Im\ignorespaces}
\DeclareMathOperator{\orb}{orb}
\DeclareMathOperator{\supp}{supp} 
\DeclareMathOperator{\Per}{Per} 
\begin{document}
\title[On linear chaos in function spaces]
{On linear chaos in function spaces}
\author{John M. Jimenez}
\address{
Department of Mathematics\newline
De Anza College \newline
21250 Stevens Creek Blvd. \newline
Cupertino, CA 95014, USA
}
\email{jimenezmjohn@fhda.edu}
\author[Marat V. Markin]{Marat V. Markin}
\address{
Department of Mathematics\newline
California State University, Fresno\newline
5245 N. Backer Avenue, M/S PB 108\newline
Fresno, CA 93740-8001, USA
}
\email[corresponding author]{mmarkin@csufresno.edu}

\subjclass{Primary 47A16, 47B37, 47B38; Secondary 47A10}
\keywords{Hypercyclic vector, periodic point, hypercyclic operator, chaotic operator, spectrum}
\begin{abstract}
We show that, in $L_{p}(0,\infty)$ ($1\leq p <\infty$), bounded weighted translations as well as their unbounded counterparts are chaotic linear operators. We also extend the unbounded case to $C_{0}[0,\infty)$ and describe the spectra of the weighted translations provided the underlying spaces are complex.
\end{abstract}
\maketitle

\section[Introduction]{Introduction}

Extending the classical Rolewicz's result \cite{Rolewicz}
and the results of \cite{arXiv:1811.06640} for the sequence spaces $l_p$ ($1\leq p <\infty$), we show that, in the space $L_{p}(0,\infty)$ ($1\leq p <\infty$), the bounded weighted left translations
\[
(T_{w,a}x)(t)=wx(t+a)\quad (|w|>1,\ a>0)
\]
as well as their unbounded counterparts
\[
(T_{w,a}x)(t)=w^{t}x(t+a) (w>1,\  a>0)
\]
are chaotic linear operators (the latter forecasted in \cite[Remark $3.1$]{arXiv:1811.06640}). 

The chaoticity of the bounded weighted left translations in $C_{0}[0,\infty)$ established in \cite{Aron-Seoan-Weber2005}, we stretch the unbounded case from the sequence space $c_0$ \cite{arXiv:1811.06640} to the space $C_0[0,\infty)$ of real- or complex-valued functions continuous on $[0,\infty)$ and vanishing at infinity, which is Banach relative to
the norm
\[
C_0[0,\infty)\ni x\mapsto \|x\|_\infty:=\sup_{t\ge 0}|x(t)|
\]
(also forecasted in \cite[Remark $3.1$]{arXiv:1811.06640}) and describe the spectra of the weighted translations provided the underlying spaces are complex.

\section[Preliminaries]{Preliminaries}

\subsection{Hypercyclicity and Chaoticity}\

For a (bounded or unbounded) linear operator $T$ in a (real or complex) Banach space $X$, a nonzero vector 
\begin{equation*}
x\in C^\infty(T):=\bigcap_{n=0}^{\infty}D(T^n)
\end{equation*}
($D(\cdot)$ is the \textit{domain} of an operator, $T^0:=I$, $I$ is the \textit{identity operator} on $X$) is called \textit{hypercyclic} if its \textit{orbit} under $T$
\[
\orb(x,T):=\left\{T^nx\right\}_{n\in\Z_+}
\]
($\Z_+:=\left\{0,1,2,\dots\right\}$ is the set of \textit{nonnegative integers}) is dense in $X$.

Linear operators possessing hypercyclic vectors are said to be \textit{hypercyclic}.

If there exist an $N\in \N$ ($\N:=\left\{1,2,\dots\right\}$ is the set of \textit{natural numbers}) and a vector 
\[
x\in D\left(T^N\right)\quad \text{with}\quad T^Nx = x,
\]
such a vector is called a \textit{periodic point} for the operator $T$ of period $N$. If $x\ne 0$, we say that $N$ is a \textit{period} for $T$.

Hypercyclic linear operators with a dense in $X$ set $\Per(A)$ of periodic points are said to be \textit{chaotic}.

See \cite{Devaney,Godefroy-Shapiro1991,B-Ch-S2001}.

\begin{rems}\
\begin{itemize}
\item In the definition of hypercyclicity, the underlying space is necessarily
\textit{infini\-te-dimensional} and \textit{separable} (see, e.g., \cite{Grosse-Erdmann-Manguillot}).
\item For a hypercyclic linear operator $T$, the set $HC(T)$ of all its hypercyclic vectors
is necessarily dense in $X$, and hence, the more so, is the subspace $C^\infty(T)\supseteq HC(T)$.
\item Observe that
\[
\Per(A)=\bigcup_{N=1}^\infty \Per_N(A),
\]
where 
\[
\Per_N(A)=\ker(A^N-I),\ N\in \N
\]
is the \textit{subspace} of $N$-periodic points of $A$.
\end{itemize} 
\end{rems} 

Prior to \cite{B-Ch-S2001,deL-E-G-E2003}, the notions of linear hypercyclicity and chaoticity had been studied exclusively for \textit{continuous} linear operators on Fr\'echet spaces, in particular for \textit{bounded} linear operators on Banach spaces (for a comprehensive survey, see \cite{Bayart-Matheron,Grosse-Erdmann-Manguillot}). 

In \cite{Rolewicz}, S. Rolewicz provides the first example of \textit{hypercyclic} bounded linear operators on Banach spaces (see also \cite{Grosse-Erdmann-Manguillot}), which on the (real or complex) sequence space $l_p$ ($1\le p<\infty$) of $p$-summable sequences or $c_0$ of vanishing sequences, the latter equipped with the supremum norm
\[
c_0\ni x:=(x_k)_{k\in \N}\mapsto \|x\|_\infty:=\sup_{k\in \N}|x_k|,
\] 
are the weighted backward shifts
\[
T_w(x_k)_{k\in \N}:=w(x_{k+1})_{k\in \N}
\]
with $w\in \F$ ($\F:=\R$ or $\F:=\C$) such that $|w|>1$. Furthermore, Rolewicz's shifts are established to be \textit{chaotic} \cite{Godefroy-Shapiro1991}. 

In \cite{arXiv:1811.06640} (see also \cite{arXiv:1812.02294}), it is shown that the weighted backward shifts
\[
T_wx:=(w^k x_{k+1})_{k\in \N}
\]
with $w\in \F$ such that $|w|>1$ and maximal domain in the (real or complex) sequence spaces $l_p$ ($1\le p<\infty$) and $c_0$ are \textit{chaotic} unbounded linear operators and, provided the underlying space is complex, each $\lambda \in \C$ is a simple eigenvalue for $T_w$.

When establishing hypercyclicity, we obviate explicit construction of hypercyclic vectors by applying the subsequent version of the classical \textit{Birkhoff Transitivity Theorem} \cite[Theorem $1.16$]{Grosse-Erdmann-Manguillot} or the following \textit{Sufficient Condition for Hypercyclicity} {\cite[Theorem $2.1$]{B-Ch-S2001}}, which is an extension of \textit{Kitai's ctriterion} \cite{Kitai1982,Gethner-Shapiro1987}. 

\begin{thm}[Birkhoff Transitivity Theorem]\label{Birkhoff}\ \\
A bounded linear operator $T$ on a (real or complex) infinite-dimensional separable Banach space $X$ is hypercyclic iff it is topologically transitive, i.e., for any nonempty open subsets $U$ and $V$ of $X$, there exists an $n\in \Z_+$ such that 
\[
T^n(U)\cap V \neq \emptyset.
\]
\end{thm}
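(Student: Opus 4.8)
The plan is to establish the two implications separately; the forward implication is elementary, while the converse is where the real work lies and rests on the Baire Category Theorem. For the forward direction, I would suppose $T$ is hypercyclic with a hypercyclic vector $x$, so that $\orb(x,T)$ is dense in $X$. Given nonempty open sets $U$ and $V$, density lets me pick an $m\in\Z_+$ with $T^m x\in U$. The key observation is that, being infinite-dimensional, $X$ has no isolated points, so a dense set meets every nonempty open set in infinitely many points; in particular $\orb(x,T)$ meets $V$ infinitely often, and I can select a $j>m$ with $T^j x\in V$. Setting $n:=j-m\in\Z_+$, the point $T^m x$ lies in $U$ while $T^n(T^m x)=T^j x\in V$, whence $T^n(U)\cap V\neq\emptyset$.

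For the converse, I would assume $T$ is topologically transitive and exploit the separability of $X$ by fixing a countable base $\{V_j\}_{j\in\N}$ of nonempty open sets (e.g., the rational-radius balls centered at a countable dense subset). A vector $x$ is hypercyclic precisely when its orbit meets every $V_j$, i.e., when for each $j$ there is an $n\in\Z_+$ with $T^n x\in V_j$, which gives the representation
\[
HC(T)=\bigcap_{j\in\N}\bigcup_{n\in\Z_+}T^{-n}(V_j).
\]
Since $T$ is continuous, each preimage $T^{-n}(V_j)$ is open, so each set $G_j:=\bigcup_{n\in\Z_+}T^{-n}(V_j)$ is open. To see that $G_j$ is also dense, I would take an arbitrary nonempty open set $U$ and invoke topological transitivity to produce an $n$ with $T^n(U)\cap V_j\neq\emptyset$; unwinding this yields a point of $U$ lying in $T^{-n}(V_j)\subseteq G_j$, so $G_j\cap U\neq\emptyset$.

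Finally, since $X$ is a Banach space it is a Baire space, and the Baire Category Theorem ensures that the countable intersection $\bigcap_{j\in\N}G_j=HC(T)$ of dense open sets is itself dense, hence nonempty. Any element of this set is a hypercyclic vector (automatically nonzero, as the orbit of $0$ is not dense in an infinite-dimensional space), so $T$ is hypercyclic. The main obstacle is organizing the converse correctly: expressing $HC(T)$ as a $G_\delta$ set and verifying that each $G_j$ is open (continuity of $T$, which is exactly why boundedness is assumed) and dense (topological transitivity), after which the completeness of $X$ does the essential work through Baire's theorem. As a bonus, this argument shows $HC(T)$ is in fact a dense $G_\delta$, recovering the density of $HC(T)$ recorded in the preliminary remarks.
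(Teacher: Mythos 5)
The paper does not prove this statement: it quotes the Birkhoff Transitivity Theorem as a known result, citing \cite[Theorem $1.16$]{Grosse-Erdmann-Manguillot}, so there is no in-paper argument to compare against. Your proof is the standard one and is correct: the forward direction via the observation that a normed space has no isolated points (so the dense orbit meets $V$ at arbitrarily late indices, allowing $j>m$), and the converse via the representation of the set of vectors with dense orbit as the $G_\delta$ set $\bigcap_{j}\bigcup_{n}T^{-n}(V_j)$, whose terms are open by continuity of $T$ and dense by transitivity, so that the Baire Category Theorem applies in the complete space $X$. This is exactly the argument in the cited reference, and it also yields the density of $HC(T)$ noted in the paper's preliminary remarks.
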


Cf. \cite[Theorem $2.19$]{Grosse-Erdmann-Manguillot}.

\begin{thm}[Sufficient Condition for Hypercyclicity]\label{SCH}\ \\
Let $X$ be a  (real or complex) infinite-dimensional separable Banach space and $T$ be a densely defined linear operator in $X$ such that each power $T^{n}$ ($n\in\N$) is a closed operator. If there exists a set
\[
Y\subseteq C^\infty(T):=\bigcap_{n=1}^\infty D(T^n)
\]
dense in $X$ and a mapping $S:Y\to Y$ such that
\begin{enumerate}
\item $\forall\, x\in Y:\ TSx=x$ and
\item $\forall\, x\in Y:\ T^nx,S^nx\to 0,\ n\to \infty$,
\end{enumerate}
then the operator $T$ is hypercyclic.
\end{thm}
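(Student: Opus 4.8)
The plan is to build a single hypercyclic vector by hand. Since $T$ is merely closed on each power $T^{n}$ rather than bounded, the Birkhoff route of Theorem~\ref{Birkhoff} is unavailable, so I would argue directly in the spirit of Kitai's criterion. First I would extract the purely algebraic content of the hypotheses: from $TSx=x$ on $Y$ together with $S(Y)\subseteq Y$ one obtains, by induction, $T^{n}S^{n}x=x$ for all $x\in Y$ and $n\in\N$, and more generally the mixed relations $T^{m}S^{\ell}x=S^{\ell-m}x$ when $\ell\ge m$ and $T^{m}S^{\ell}x=T^{m-\ell}x$ when $m\ge\ell$, all valid on $Y$. These identities will drive the estimates below.

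Next I would fix the target data. By separability of $X$ and density of $Y$, choose a sequence $\{y_{k}\}_{k\in\N}\subseteq Y$ dense in $X$, and re-enumerate it as $\{z_{j}\}_{j\in\N}$ so that every $y_{k}$ occurs infinitely often. I would then select exponents $n_{1}<n_{2}<\cdots$ recursively, imposing at stage $j$ finitely many smallness requirements, each achievable because $T^{\ell}z,S^{\ell}z\to 0$ as $\ell\to\infty$ for every $z\in Y$: namely (i) $\|S^{\,n_{j}-i}z_{j}\|\le 2^{-j}$ for $0\le i\le j$; (ii) $\|T^{\,n_{j}-n_{i}}z_{i}\|\le 2^{-j}$ for $i<j$; and (iii) $\|S^{\,n_{j}-n_{i}}z_{j}\|\le 2^{-j}$ for $i<j$. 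With these in hand I would set
\[
x:=\sum_{j=1}^{\infty}S^{\,n_{j}}z_{j},
\]
which converges in $X$ by (i) with $i=0$.

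The crux --- and the step I expect to be the main obstacle --- is to legitimise applying powers of $T$ to this series term by term, since without continuity one cannot simply pass $T^{n}$ through the sum; here the hypothesis that each $T^{n}$ is closed does exactly this work. Writing $x_{N}$ for the partial sums, one has $x_{N}\to x$; the mixed relations together with (i) show that for every fixed $n$ the series $\sum_{j}T^{n}S^{\,n_{j}}z_{j}$ is absolutely convergent, since for $j\ge n$ its general term equals $S^{\,n_{j}-n}z_{j}$ (bounded by $2^{-j}$ via (i)) while only the finitely many indices with $n_{j}<n$ survive as fixed vectors $T^{\,n-n_{j}}z_{j}$. Hence $T^{n}x_{N}$ converges to that sum; since $T^{n}$ is closed and $x_{N}\to x$, it follows that $x\in D(T^{n})$ and $T^{n}x=\sum_{j}T^{n}S^{\,n_{j}}z_{j}$. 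As this holds for every $n\in\N$, we obtain $x\in C^{\infty}(T)$ together with a valid term-by-term formula for each power.

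Finally I would establish density of the orbit. Fixing $m$ and splitting $T^{\,n_{m}}x-z_{m}$ by the mixed relations into the ``past'' sum $\sum_{i<m}T^{\,n_{m}-n_{i}}z_{i}$ and the ``future'' sum $\sum_{i>m}S^{\,n_{i}-n_{m}}z_{i}$, estimates (ii) and (iii) yield $\|T^{\,n_{m}}x-z_{m}\|\le m\,2^{-m}\to 0$ as $m\to\infty$. Since each target $y_{k}$ equals $z_{m}$ for infinitely many $m$, every $y_{k}$ is a limit point of $\orb(x,T)$; as $\{y_{k}\}$ is dense in $X$, the orbit $\orb(x,T)$ is dense, so $x$ is a hypercyclic vector and $T$ is hypercyclic.
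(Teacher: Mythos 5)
Your proof is correct, but there is nothing in the paper to compare it against: Theorem \ref{SCH} is quoted, not proved, the authors citing it as \cite[Theorem~$2.1$]{B-Ch-S2001}. What you have written is essentially the standard constructive proof from that reference (the Gethner--Shapiro-style argument behind Kitai's criterion, adapted to unbounded operators): the mixed relations $T^{m}S^{\ell}=S^{\ell-m}$ or $T^{m-\ell}$ on $Y$ are the right algebraic backbone, the recursive choice of $n_{1}<n_{2}<\cdots$ with your conditions (i)--(iii) is achievable because each condition at stage $j$ is a finite list of requirements met for $n_{j}$ large, and the one genuinely delicate step --- passing $T^{n}$ through the series $\sum_{j}S^{n_{j}}z_{j}$ --- is exactly where closedness of every power is needed; you handle it correctly by applying $T^{n}$ to the partial sums (which lie in the linear subspace $C^{\infty}(T)$ even though $Y$ itself is only assumed to be a set) and then invoking closedness together with the absolute convergence of $\sum_{j}T^{n}S^{n_{j}}z_{j}$. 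Two cosmetic points: for the finitely many indices $j<n$ with $n_{j}\ge n$ the bound from (i) is not literally available (it covers only $i\le j$), but those terms are finitely many fixed vectors, so absolute convergence is unaffected; and one should remark that the resulting $x$ is automatically nonzero, since the orbit of $0$ is $\{0\}$ and cannot be dense. Neither affects the validity of the argument.
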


\subsection{Resolvent Set and Spectrum}\

For a linear operator $T$ in a complex Banach space $X$, the set
\[
\rho(A):=\left\{ \lambda\in \C \,\middle|\, \exists\, (T-\lambda I)^{-1}\in L(X) \right\}
\]
($L(X)$ is the space of bounded linear operators on $X$) and its complement $\sigma(T):=\C\setminus \rho(T)$ are called the \textit{resolvent set} and the \textit{spectrum} of $T$, respectively.

The spectrum $\sigma(T)$ of a closed linear operator $T$ in a complex Banach space $X$ is the union of the following pairwise disjoint sets:
\begin{equation*}
\begin{split}
& \sigma_p(T):=\left\{\lambda\in \C \,\middle|\,T-\lambda I\ \text{is \textit{not injective}, i.e., $\lambda$ is an \textit{eigenvalue} of $T$} \right\},\\
& \sigma_c(T):=\left\{\lambda\in \C \,\middle|\,T-\lambda I\ \text{is \textit{injective},
\textit{not surjective}, and $\overline{R(T-\lambda I)}=X$} \right\},\\
& \sigma_r(T):=\left\{\lambda\in \C \,\middle|\,T-\lambda I\ \text{is \textit{injective} and $\overline{R(T-\lambda I)}\neq X$} \right\}
\end{split}
\end{equation*}
($R(\cdot)$ is the \textit{range} of an operator, and $\overline{\cdot}$ is the \textit{closure} of a set), called the \textit{point}, \textit{continuous} and \textit{residual spectrum} of $T$, respectively (see, e.g., \cite{Dun-SchI,Markin2020EOT}).

\section{Bounded Weighted Translations on $L_{p}(0,\infty)$)}

\begin{thm}[Bounded Weighted Translations on $L_p(0,\infty)$]\label{Lp}\ \\
On the (real or complex) space $L_p(0,\infty)$ ($1\le p<\infty$), the weighted left translation
\begin{equation*}
(T_{w,a}x)(t):=wx(t+a),\ x\in L_p(0,\infty),\, t\ge 0,
\end{equation*}
with $w\in \F$ such that $|w|>1$ and $a>0$ is a chaotic bounded linear operator. 

Furthermore, provided the underlying space is complex,
\begin{equation}\label{sp}
\sigma(T_{w,a})=\left\{\lambda\in \C\,\middle|\, |\lambda|\le |w| \right\}
\end{equation}
with
\begin{equation}\label{pcsp}
\sigma_p(T_{w,a})=\left\{\lambda\in \C\,\middle|\, |\lambda|< |w| \right\}\quad 
\text{and}\quad
\sigma_c(T_{w,a})=\left\{\lambda\in \C\,\middle|\, |\lambda|= |w| \right\}.
\end{equation}
\end{thm}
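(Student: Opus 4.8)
The plan is to establish boundedness first, then chaoticity (hypercyclicity plus density of periodic points), and finally the spectral picture, relying on Theorem~\ref{SCH} for the hypercyclic part and on an adjoint computation for the continuous spectrum. To begin, the substitution $s=t+a$ gives
\[
\|T_{w,a}x\|_p^p=|w|^p\int_a^\infty|x(s)|^p\,ds\le |w|^p\|x\|_p^p,
\]
so $T_{w,a}$ is bounded with $\|T_{w,a}\|=|w|$ (the norm being attained on functions supported in $[a,\infty)$), and in particular $C^\infty(T_{w,a})=L_p(0,\infty)$.

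For hypercyclicity I would apply Theorem~\ref{SCH} with $Y$ the dense subspace of compactly supported functions and $S$ the weighted forward shift
\[
(Sx)(t):=\begin{cases} w^{-1}x(t-a),& t\ge a,\\ 0,& 0\le t<a.\end{cases}
\]
A direct check gives $T_{w,a}Sx=x$ on $Y$; moreover $S$ preserves compact support, and for $x\in Y$ with $\supp x\subseteq[0,M]$ one has $T_{w,a}^nx=0$ as soon as $na>M$, while $\|S^nx\|_p=|w|^{-n}\|x\|_p\to 0$ since $|w|>1$. Thus both hypotheses of Theorem~\ref{SCH} hold and $T_{w,a}$ is hypercyclic.

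Next, density of $\Per(T_{w,a})$. A period-$N$ point solves $w^Nx(t+Na)=x(t)$, i.e. $x(t+kNa)=w^{-kN}x(t)$, so it is completely determined by its restriction $\phi$ to $[0,Na)$, and the geometric estimate $\|x\|_p^p=\bigl(\sum_{k\ge0}|w|^{-kNp}\bigr)\|\phi\|_p^p$ shows conversely that every $\phi\in L_p[0,Na)$ extends to a genuine element of $L_p(0,\infty)$. Given a compactly supported $f$ with $\supp f\subseteq[0,M]$, I would choose $N$ with $Na\ge M$, set $\phi:=f|_{[0,Na)}$, and extend; then $x-f$ is supported in $[Na,\infty)$ with $\|x-f\|_p^p=\bigl(\sum_{k\ge1}|w|^{-kNp}\bigr)\|f\|_p^p\to0$ as $N\to\infty$, giving the required density and hence chaoticity.

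For the spectrum (complex case), the spectral radius bound $r(T_{w,a})\le\|T_{w,a}\|=|w|$ yields $\sigma(T_{w,a})\subseteq\{|\lambda|\le|w|\}$. Solving $T_{w,a}x=\lambda x$ gives $x(t+a)=(\lambda/w)x(t)$; the same geometric convergence forces $|\lambda/w|<1$ for a nonzero $L_p$ solution, while $\lambda=0$ admits eigenvectors supported in $[0,a)$, so $\sigma_p(T_{w,a})=\{|\lambda|<|w|\}$, and closedness of the spectrum upgrades this to $\sigma(T_{w,a})=\{|\lambda|\le|w|\}$. The remaining, and most delicate, point is to place the entire circle $\{|\lambda|=|w|\}$ in $\sigma_c(T_{w,a})$, for which the main obstacle is the density of $R(T_{w,a}-\lambda I)$. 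I would obtain this from the adjoint: on $L_q$ ($1/p+1/q=1$, with $L_\infty$ when $p=1$) the adjoint is the weighted forward shift $(T_{w,a}^*y)(s)=w\,y(s-a)$ for $s\ge a$ and $0$ on $[0,a)$, so $T_{w,a}^*y=\lambda y$ forces $y\equiv0$ on $[0,a)$ (as $\lambda\ne0$) and then $y(s)=(w/\lambda)y(s-a)$ propagates $y\equiv0$ to all of $(0,\infty)$. Hence $\ker(T_{w,a}^*-\lambda I)=\{0\}$, which by the Hahn--Banach theorem makes $R(T_{w,a}-\lambda I)$ dense; being injective but non-invertible on the circle, $T_{w,a}-\lambda I$ fails to be surjective there, so $\{|\lambda|=|w|\}\subseteq\sigma_c(T_{w,a})$ and $\sigma_r(T_{w,a})=\emptyset$.
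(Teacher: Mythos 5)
Your proposal is correct, but it departs from the paper's proof at two points. For hypercyclicity the paper argues via the Birkhoff Transitivity Theorem (Theorem \ref{Birkhoff}): given eventually zero representatives $x\in U$ and $y\in V$ supported in $[0,Na]$, it builds $z_n$ agreeing with $x$ near the origin and carrying a copy of $w^{-n}y$ on $[na,Na+na)$, so that $\|z_n-x\|_p=|w|^{-n}\|y\|_p\to 0$ while $T^nz_n=y$; you instead invoke the hypercyclicity criterion (Theorem \ref{SCH}) with the right inverse $S$, which is equally valid here (for a bounded operator every power is closed and $C^\infty(T)=X$) and has the advantage of running in exact parallel with the unbounded case, where the paper itself switches to that criterion. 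For the continuous spectrum the paper quotes the general fact that hypercyclicity forces $T-\lambda I$ to have dense range for every $\lambda$ (\cite[Lemma $2.53$]{Grosse-Erdmann-Manguillot}), hence $\sigma_r(T)=\emptyset$; you prove the dense range directly by computing the adjoint forward shift on the dual space and showing it has no nonzero eigenvalues, which makes the spectral argument self-contained at the cost of a duality computation (with the small caveat that for $p=1$ the dual is $L_\infty$, where the same block-by-block propagation still annihilates any candidate eigenfunction). Your treatment of boundedness, of the density of periodic points via periodic extension of a function supported in $[0,Na)$ with decay factor $w^{-N}$ per block, and of the point spectrum via the geometric-series convergence criterion is essentially identical to the paper's.
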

    
\begin{proof}
Let $1\le p<\infty$, $w\in \F$ such that $|w|>1$, and $a>0$ be arbitrary and, for the simplicity of notation, let $T:=T_{w,a}$.

The \textit{linearity} of $T$ is obvious. Its \textit{boundedness} immediately follows from the fact that 
\[
T=wB,
\]
where 
\begin{equation*}
(Bx)(t):=x(t+a),\ x\in L_p(0,\infty),\, t\ge 0,
\end{equation*}
is a left translation operator with $\|B\|=1$, and hence,
\begin{equation}\label{ON}
\|T\|=|w|\|B\|=|w|
\end{equation} 
(here and wherever appropriate, $\|\cdot\|$ also stands for the \textit{operator norm}).

Suppose that
\[
U,V\subseteq L_{p}(0,\infty)
\]
are arbitrary nonempty open sets. 

By the denseness in $L_{p}(0,\infty)$ of the equivalence classes represented by $p$-integrable on $(0,\infty)$ eventually zero functions (see, e.g., \cite{MarkinRA}), there exist equivalence classes 
\[
x\in U\quad \text{and}\quad y\in V
\]
represented by such functions $x(\cdot)$ and $y(\cdot)$, respectively. Since the representative functions are eventually zero, 
\[
\exists\, N\in \mathbb{N}\ \forall\, t>Na:\ x(t)=0\ \text{and}\ y(t)=0.
\]

    For an arbitrary $ n \geq N$, the $p$-integrable on $(0,\infty)$ eventually zero function
    
 \[    
z_n(t) := \begin{cases} 
      x(t), & t\in [0,Na), \\
      w^{-n}y(t-an), &  t\in [na,Na+na), \\
      0, & \text{otherwise},
   \end{cases}
\]
    
represents an equivalence class $z_n\in L_{p}(0,\infty)$.

Observe that, for all $n\ge N$, 
    \[   ( T^nz_n)(t)=
      y(t),\ t\ge 0,
    \]
    and 
    \[
    \|z_n-x\|_{p}=|w|^{-n}\|y\|_p\to 0,\ n\to \infty
    \]
    
Hence, for all sufficiently large $n\in \N$, 
\[
z_n\in U\quad \text{and} \quad T^{n}z_n=y\in V
\]
(see Figure \ref{fig5}).

\begin{figure}[!htbp]
\begin{center}
\includegraphics[height=1.5in]{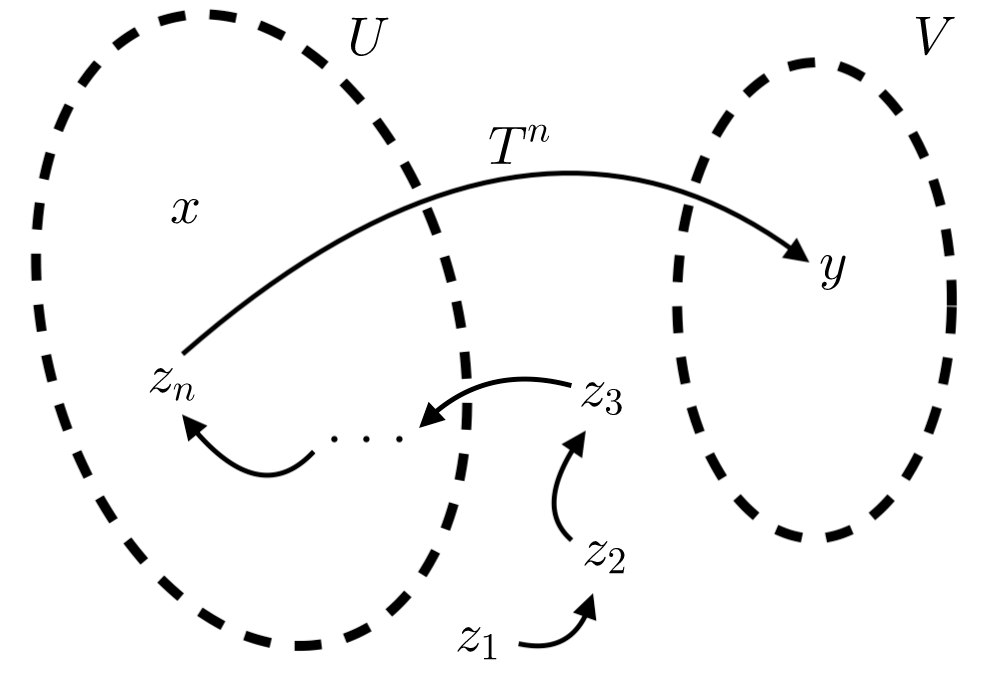}
\end{center}
\caption{}
\label{fig5}
\end{figure} 
    
By the \textit{Birkhoff Transitivity Theorem} (Theorem \ref{Birkhoff}), we infer that the operator $T$ is \textit{hypercyclic}.

To prove that $T$ has a dense set of periodic points, let us first show that each $N\in \N$ is a period for $T$.

For an arbitrary $N\in\N$, let 
\[
x\in \ker T^N\setminus \left\{ 0\right\},
\]
where
\[
\ker T^N=\left\{f\in L_p(0,\infty)\,\middle|\,  f(t)=0,\ t>Na\right\}.
\]

Then the $p$-integrable on $(0,\infty)$ function
\begin{equation}\label{pN}
 x_N(t) := w^{-kN}x(t-kNa),\ t\in D_{k}:=[kNa,(k+1)Na),k\in \Z_+,
\end{equation}
represents an $N$-periodic point $x_N$ of $T$.

Indeed, in view of $|w|>1$,
\begin{align*}
\int_0^\infty\left|x_N(t)\right|^p\,dt&=\sum_{k=0}^{\infty}  \displaystyle\int_{D_{k}} \left| w^{-kN}x(t-kNa) \right|^{p}\,dt = \sum_{k=0}^{\infty}{\left(|w|^{-pN}\right)}^k\int_{0}^{Na} \left|x(t) \right|^{p}\,dt\\
&=\sum_{k=0}^{\infty}{\left(|w|^{-pN}\right)}^k\|x\|_p^p= \frac{1}{1-|w|^{-pN}}\|x\|_p^p<\infty,
\end{align*}
and hence, $x_N\in L_{p}(0,\infty)$. 

Further, since
\begin{align*}
(T^Nx_N)(t)&=w^Nx_N(t+Na)=w^N w^{-kN}x(t+Na-kNa)\\
&=w^{-(k-1)N}y(t-(k-1)Na),\ t\in D_{k-1},k\in \N,
\end{align*}
we infer that
\[
T^Nx_N=x_N.
\]

Suppose that $x\in L_{p}(0,\infty)$ is an arbitrary equivalence class represented by a $p$-integrable on $(0,\infty)$ eventually zero function $x(\cdot)$. Then 
\[
\exists\, M\in \N:\ x(t)=0,\ t>Ma.
\]

Let $x_N$ be the periodic point of the operator $T$ of an arbitrary period $N\ge M$ defined based on $x$ by \eqref{pN}. Then
\begin{align*}
\|x_N-x\|_{p}^p&=\sum_{k=0}^{\infty} \int_{D_{k}} \left|x_N(t)-x(t)\right|^{p}\,dt
=\sum_{k=1}^{\infty} \int_{D_{k}} \left|w^{-kN}x(t-kNa)\right|^{p}\,dt\\
&=\sum_{k=1}^{\infty}{\left(|w|^{-pN}\right)}^k\int_{0}^{Na} \left|x(t) \right|^{p}\,dt
=\sum_{k=1}^{\infty}{\left(|w|^{-pN}\right)}^k\|x\|_p^p\\
&= \frac{|w|^{-pN}}{1-|w|^{-pN}}\|x\|_p^p\to 0,\ N\to \infty. 
\end{align*}

By the denseness in $L_{p}(0,\infty)$ ($1\le p<\infty$) of the subspace 
\begin{equation}\label{YLp}
Y:=\bigcup_{n=1}^\infty \ker T^n,
\end{equation}
where
\begin{equation}\label{kerTnLp}
\ker T^n=\left\{f\in L_p(0,\infty)\,\middle|\,  f(t)=0,\ t>na\right\},\ n\in \N,
\end{equation}
of the equivalence classes represented by $p$-integrable on $(0,\infty)$ eventually zero functions, we infer that the set $\Per(T)$ of periodic points of $T$ is \textit{dense} in $L_{p}(0,\infty)$ as well, and hence, the operator $T$ is \textit{chaotic}.

Now, assuming that the space $L_{p}(0,\infty)$ is complex, let us prove \eqref{sp} and \eqref{pcsp}.

In view of \eqref{ON}, by \textit{Gelfand's Spectral Radius Theorem} \cite{Markin2020EOT},
\begin{equation}\label{spincl}
\sigma(T)\subseteq \left\{\lambda\in \C\,  \middle |\, |\lambda|\leq |w| \right\}.
\end{equation}

For an arbitrary $\lambda\in \mathbb{C}$ with $|\lambda|<|w|$, let 
\[
x\in \ker T\setminus \left\{ 0\right\}\subseteq Y\setminus \left\{ 0\right\},
\]
where
\[
\ker T=\left\{f\in L_p(0,\infty)\,\middle|\,  f(t)=0,\ t>a\right\}
\]
(see \eqref{kerTnLp}).

Then the $p$-integrable on $(0,\infty)$ function
\begin{equation}\label{efbounded}
x_{\lambda}(t):=\left(\dfrac{\lambda}{w}\right)^{k}x(t-ka),\
t\in [ka,(k+1)a),k\in \Z_+,\quad (0^0:=1)
\end{equation}
is an eigenvector for $T$ associated with $\lambda$.

Indeed, in view of $|\lambda|<|w|$,
\begin{align*}
0<\|x_{\lambda}\|_p^p&=\int_0^\infty|x_{\lambda}(t)|^p\,dt=\sum_{k=0}^\infty \int_{ka}^{(k+1)a}\left|\left(\dfrac{\lambda}{w}\right)^{k}x(t-ka)\right|^p\,dt
\\
&=\sum_{k=0}^\infty {\left|\dfrac{\lambda}{w}\right|}^{kp}\int_{ka}^{(k+1)a}|x(t-ka)|^p\,dt 
=\sum_{k=0}^\infty {\left({\left|\dfrac{\lambda}{w}\right|}^{p}\right)}^{k}\int_{0}^{a}|x(t)|^p\,dt
\\
&=\sum_{k=0}^\infty {\left({\left|\dfrac{\lambda}{w}\right|}^{p}\right)}^{k}\|x\|_p^p<\infty,
\end{align*}
and hence, $x_{\lambda}\in L_{p}(0,\infty)\setminus \left\{ 0\right\}$. 

Further,
\begin{align*}
(Tx_{\lambda})(t)&=wx_{\lambda}(t+a)=w\left(\dfrac{\lambda}{w}\right)^{k}x(t+a-ka)\\
&=\lambda\left(\dfrac{\lambda}{w}\right)^{k-1}x(t-(k-1)a),\ t\in [(k-1)a,ka),k\in \N,
\end{align*}
which implies that
\begin{equation}\label{EV}
Tx_{\lambda}=\lambda x_{\lambda},
\end{equation}
and hence, $\lambda\in \sigma_p(T)$.

Conversely, let $\lambda\in \sigma_p(T)$ be an arbitrary eigenvalue for $T$ with an associated eigenvector $x_{\lambda}\in L_p(0,\infty)\setminus \left\{0 \right\}$. Then, for
\[
x_k(t):=x_{\lambda}(t),\ t\in [ka,(k+1)a), k\in \Z_+,
\]
by \eqref{EV}, we have:
\[
\lambda x_{k-1}(t)=wx_{k}(t+a),\ \text{on}\ [ka,(k+1)a)\pmod{\lambda_1},\ k\in \N,
\]
($\lambda_1$ is the Lebesgue measure on $\R$).

Whence, 
\[
x_k(t)=\left(\dfrac{\lambda}{w}\right)^{k}x_{\lambda}(t-ka),\ \text{on}\ [ka,(k+1)a)\pmod{\lambda_1},\ k\in \Z_+,
\]
which, in view of $x_{\lambda}\ne 0$, implies that
\[
0<\int_{0}^{a}|x_{\lambda}(t)|^p\,dt \le \int_0^\infty|x_{\lambda}(t)|^p\,dt=\|x_{\lambda}\|_p^p<\infty
\]
and
\begin{align*}
\infty>\|x_{\lambda}\|_p^p&=\int_0^\infty|x_{\lambda}(t)|^p\,dt
=\sum_{k=0}^\infty \int_{ka}^{(k+1)a}\left|\left(\dfrac{\lambda}{w}\right)^{k}x_{\lambda}(t-ka)\right|^p\,dt
\\
&=\sum_{k=0}^\infty {\left|\dfrac{\lambda}{w}\right|}^{kp}\int_{ka}^{(k+1)a}|x_{\lambda}(t-ka)|^p\,dt 
=\sum_{k=0}^\infty {\left({\left|\dfrac{\lambda}{w}\right|}^{p}\right)}^{k}\int_{0}^{a}|x_{\lambda}(t)|^p\,dt.
\end{align*}

The convergence of the latter series implies that
\[
{\left({\left|\dfrac{\lambda}{w}\right|}^{p}\right)}^{k}\to 0,\ k\to\infty,
\]
which, in its turn, means that 
\[
|\lambda|<|w|.
\]

Thus, $x_\lambda$ can be represented by a $p$-integrable on $(0,\infty)$ function $x_{\lambda}(\cdot)$ of the form given by \eqref{efbounded}, where the corresponding $x\in \ker T\setminus \left\{ 0\right\}$ is represented by
\[
x(t):=\chi_{[0,a]}(t)x_{\lambda}(t),\ t\ge 0
\]
($\chi_\delta(\cdot)$ is the \textit{characteristic function} of a set $\delta$).

The above proves that
\begin{equation}\label{psp}
\sigma_p(T)=\left\{\lambda\in \C\,  \middle |\, |\lambda|<|w| \right\}.
\end{equation}

Considering that $\sigma(T)$ is a \textit{closed} set in $\C$ (see, e.g., \cite{Dun-SchI,Markin2020EOT}), we infer from \eqref{spincl} and \eqref{psp} that \eqref{sp} holds.

Since, by \cite[Lemma $2.53$]{Grosse-Erdmann-Manguillot}, the hypercyclicity of $T$ implies the operator $T-\lambda I$ has a \textit{dense range} for all $\lambda\in \C$, we infer that 
\[
\sigma_{r}(T)=\emptyset
\]
(cf. \cite[Proposition $4.1$]{arXiv:2106.14872}, \cite[Lemma $1$]{MarkSich2019(1)}), and hence, in view of \eqref{sp}
and \eqref{psp}, we conclude that
\[
\sigma_c(T)=\left\{\lambda\in \C\,  \middle |\, |\lambda|=|w| \right\}.
\] 

Thus, \eqref{pcsp} holds as well.
\end{proof}

\section{Unbounded Weighted Translations in $L_{p}(0,\infty)$)}

\begin{lem}[Closedness of Powers]\label{CPLp}\ \\
In the (real or complex) space $L_p(0,\infty)$ ($1\le p<\infty$), for the weighted left translation
\begin{equation*}
(T_{w,a}x)(t):=w^{t} x(t+a),\ t\ge 0,
\end{equation*}
with $w>1$, $a>0$, and domain 
\begin{equation*}
D(T_{w,a}):=\left\{ x\in L_p(0,\infty)\, \middle |\, \displaystyle\int_{0}^{\infty} |w^{t} x(t+a)|^{p}\, dt <\infty \right\},
\end{equation*}
each power $T_{w,a}^n$ ($n\in \N$) is a densely defined unbounded closed linear operator.
\end{lem}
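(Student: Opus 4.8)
The plan is to first bring $T^n:=T_{w,a}^n$ into a tractable closed form. A straightforward induction on $n$ shows that, on its natural domain,
\[
(T^n x)(t)=w^{\,nt+\binom{n}{2}a}\,x(t+na),\quad t\ge 0,
\]
so that $T^n$ is, up to the constant factor $w^{\binom{n}{2}a}$, again a weighted backward shift of the very same type, now with base $w^n>1$ and shift $na>0$. The crucial preliminary step is to identify the \emph{composition} domain $D(T^n)$ with the \emph{maximal} domain
\[
D_{\max}:=\left\{ x\in L_p(0,\infty)\,\middle|\, \int_0^\infty\bigl|w^{\,nt+\binom{n}{2}a}x(t+na)\bigr|^p\,dt<\infty \right\}.
\]
The inclusion $D(T^n)\subseteq D_{\max}$ is immediate from the formula. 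For the reverse inclusion I would show, via the substitution $s=t+ka$, that the single top-order condition $w^{\,ns}x(s)\in L_p(na,\infty)$ forces $w^{\,ks}x(s)\in L_p(ka,\infty)$ for every $1\le k\le n$: on $[na,\infty)$ one has $w^{ks}\le w^{-(n-k)na}w^{ns}$ because $w>1$, while on the bounded strip $[ka,na)$ the weight is bounded and $x\in L_p(0,\infty)$ already suffices. This gives $T^kx\in L_p(0,\infty)$ for all $k<n$, i.e. $x\in D(T^n)$, and hence $D(T^n)=D_{\max}$ with $T^n$ acting by the displayed formula.

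With this in hand, linearity is clear. For dense definedness I would note that every function vanishing outside a bounded interval $[0,M]$ lies in $D_{\max}$ — its image is supported in $[0,M-na]$, where the weight is bounded, hence square/$p$-integrable — and that such functions are dense in $L_p(0,\infty)$. For unboundedness I would test $T^n$ on the indicators $\chi_{[k,k+1]}$: each lies in $D_{\max}$ and has unit norm, yet $\|T^n\chi_{[k,k+1]}\|_p$ is bounded below by a quantity of order $w^{\,n(k-na)}$, which tends to $\infty$; thus $T^n$ is not bounded.

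For closedness I would run the standard a.e.-convergence argument. If $x_j\to x$ and $T^nx_j\to y$ in $L_p(0,\infty)$, then passing to a subsequence gives $x_j\to x$ a.e., whence $w^{\,nt+\binom{n}{2}a}x_j(t+na)\to w^{\,nt+\binom{n}{2}a}x(t+na)$ a.e.; extracting a further subsequence yields $T^nx_j\to y$ a.e., so by uniqueness of almost-everywhere limits $y(t)=w^{\,nt+\binom{n}{2}a}x(t+na)$ a.e. Since $y\in L_p(0,\infty)$, this places $x$ in $D_{\max}=D(T^n)$ and delivers $T^nx=y$, establishing closedness.

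The main obstacle is the domain identification in the first paragraph: one must verify that the single top-order integrability condition defining $D_{\max}$ already subsumes all the intermediate requirements $T^kx\in L_p(0,\infty)$ ($k<n$) that membership in the composition domain $D(T^n)$ demands. This is exactly where the hypothesis $w>1$ enters decisively — it is what makes the higher-order weight dominate the lower-order ones on the unbounded part $[na,\infty)$ — and once it is settled, both the dense definedness and the closedness arguments refer unambiguously to one and the same operator.
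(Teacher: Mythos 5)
Your proof is correct and follows essentially the same route as the paper's: the same closed form $(T^nx)(t)=w^{nt+\frac{n(n-1)}{2}a}x(t+na)$, dense definedness via eventually zero functions, unboundedness tested on unit-norm indicator functions, and closedness via passage to a.e.-convergent subsequences (the paper invokes the Riesz theorem for this extraction) and uniqueness of a.e.\ limits. The one point where you go beyond the paper is the explicit verification that the composition domain of $T^n$ coincides with the maximal domain of the $n$-fold formula --- the paper simply asserts \eqref{DTn} ``inductively'' --- and your argument for it (the top-order weight dominates the lower-order ones on $[na,\infty)$ since $w>1$, while on the bounded strip membership in $L_p$ suffices) is sound and fills a genuine, if minor, gap.
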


\begin{proof}
Let $1\le p<\infty$,  $w>1$, $a>0$, and $n\in \N$ be arbitrary and, for the simplicity of notation, let $T:=T_{w,a}$.

The \textit{linearity} of $T$ is obvious and implies that for $T^n$.

Inductively,
\begin{equation}\label{Tn}
(T^nx)(t)=w^tw^{t+a}\dots w^{t+(n-1)a}x(t+na)
=w^{nt+\frac{(n-1)na}{2}}x(t+na),\ t\ge 0,
\end{equation}
and
\begin{equation}\label{DTn}
D(T^n)=\left\{ x\in L_p(0,\infty)\, \middle |\, \displaystyle \int_{0}^{\infty} {\left|w^{nt+\frac{(n-1)na}{2}}x(t+na)\right|}^{p}\, dt <\infty \right\}.
\end{equation}

By the denseness in $L_{p}(0,\infty)$ ($1\le p<\infty$) of the subspace 
\begin{equation}\label{YLpU}
Y:=\bigcup_{m=1}^\infty \ker T^m,
\end{equation}
where
\begin{equation}\label{kerTnLpU}
\ker T^m=\left\{f\in L_p(0,\infty)\,\middle|\,  f(t)=0,\ t>ma\right\},\ m\in \N,
\end{equation}
of the equivalence classes represented by $p$-integrable on $(0,\infty)$ eventually zero functions and the inclusion
\begin{equation}\label{YLpCinf}
Y\subset C^\infty(T):=\bigcap_{m=1}^\infty D(T^m),
\end{equation}
which follows from \eqref{DTn}, we infer that the operator $T^n$ is \textit{densely defined}.

The  \textit{unboundedness} of $T^n$ follows from the fact that, for the equivalence classes $e_m\in L_p(0,\infty)$, $m \in \N$, represented by
\[
e_m(t):=\chi_{[m,m+1]}(t),\ m\in \N,t\ge 0,
\]
we have:
\[
e_m\in D(T^n),\ \|e_m\|_p=1,\ m\in \N,
\]
and, for all $m\in \N$ sufficiently large so that $m\ge na$, in view of $w>1$, 
\begin{align*}
\|T^ne_m\|_{p}
&=\left[\int_{0}^{\infty} {\left|w^{nt+\frac{(n-1)na}{2}}e_m(t+na)\right|}^{p}\, dt\right]^{1/p}\\
&=
\left[\int_{m-na}^{m+1-na} w^{p\left(nt+\frac{(n-1)na}{2}\right)}\,dt\right]^{1/p}
\\
&\ge w^{n(m-na)+\frac{(n-1)na}{2}}\to \infty,\ m\to\infty.
\end{align*}

Let a sequence $(x_m)_{m\in \N}$ in $L_p(0,\infty)$ be such that 
\[
D(T^n)\ni x_m\to x\in L_p(0,\infty),\ m\to \infty,
\]
and
\[
T^nx_m\to y\in L_p(0,\infty),\ m\to \infty.
\]

The sequences $(x_m(\cdot))_{m\in \N}$ and
$\left((T^nx_m)(\cdot)\right)_{m\in \N}$ of the $p$-integrable on $(0,\infty)$ representatives of the corresponding equivalence classes converging in $p$-norm on $(0,\infty)$, also converge in the Lebesgue measure $\lambda_1$ on $(0,\infty)$, and hence, by the \textit{Riesz theorem} (see, e.g., \cite{MarkinRA}), there exist subsequences $(x_{m(k)}(\cdot))_{k\in \N}$ and $\left((T^nx_{m(k)})(\cdot)\right)_{k\in \N}$ convergent \textit{a.e.} on $(0,\infty)$ relative to $\lambda_1$, i.e.,
\begin{equation}\label{mod1}
x_{m(k)}(t)\to x(t)\ \text{on}\ (0,\infty) \pmod{\lambda_1}
\end{equation}
and
\begin{equation}\label{mod2}
(T^nx_{m(k)})(t)\to y(t)\ \text{on}\ (0,\infty) \pmod{\lambda_1}.
\end{equation}

By \eqref{mod1},
\begin{align*}
(T^nx_{m(k)})(t)&=w^{nt+\frac{(n-1)na}{2}}x_{m(k)}(t+na)\\
&\to w^{nt+\frac{(n-1)na}{2}}x(t+na)\ \text{on}\ (0,\infty) \pmod{\lambda_1},
\end{align*}
which by \eqref{mod2}, in view of the \textit{completeness} of the \textit{Lebesgue} measure (see, e.g., \cite{MarkinRA}), implies that
\[
w^{nt+\frac{n(n-1)a}{2}}x(t+na)=y(t)\pmod{\lambda_1},
\]
and hence,
\[
x\in D(T^n)\quad \text{and}\quad T^nx=y.
\]

By the \textit{Sequential Characterization of Closed Linear Operators} (see, e.g., \cite{Markin2020EOT}) the operator $T^n$ is \textit{closed}.
\end{proof}

\begin{thm}[Unbounded Weighted Translations in $L_p(0,\infty)$]\label{LpU}\ \\
In the (real or complex) space $L_p(0,\infty)$ ($1\le p<\infty$), the weighted left translation
\begin{equation*}
(T_{w,a}x)(t):=w^{t} x(t+a),\ t\ge 0,
\end{equation*}
with $w>1$, $a>0$, and domain 
 \begin{equation*}
 D(T_{w,a}):=\left\{ x\in L_p(0,\infty)\, \middle |\, \displaystyle\int_{0}^{\infty} |w^{t} x(t+a)|^{p}\, dt <\infty \right\}
 \end{equation*}
is a chaotic unbounded linear operator. 

Furthermore, provided the underlying space is complex,
\begin{equation}\label{sp1}
\sigma(T_{w,a})=\sigma_p(T_{w,a})=\mathbb{C}.
\end{equation}
\end{thm}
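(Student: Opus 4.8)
The plan is to establish the three assertions of Theorem~\ref{LpU} --- chaoticity (hypercyclicity together with denseness of periodic points), and then the spectral equalities in the complex case --- by adapting the strategy of Theorem~\ref{Lp} to the unbounded weighted shift, relying on Lemma~\ref{CPLp} to supply the closedness and dense-definedness of all powers $T^n$. The crucial structural observation, immediate from \eqref{Tn} and \eqref{DTn}, is that the subspace $Y=\bigcup_{m=1}^\infty \ker T^m$ of (classes of) eventually-zero functions is dense in $L_p(0,\infty)$, is contained in $C^\infty(T)$, and is invariant under a natural right inverse of $T$. This makes $Y$ the dense set required by both the Sufficient Condition for Linear Hypercyclicity (Theorem~\ref{SCH}) and the periodic-point construction.

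First I would verify hypercyclicity via Theorem~\ref{SCH} rather than the Birkhoff theorem, since $T$ is now unbounded and Birkhoff applies only to bounded operators. I would define the weighted forward shift $S\colon Y\to Y$ inverting \eqref{Tn}, namely
\[
(Sx)(t):=
\begin{cases}
0, & t\in[0,a),\\
w^{-(t-a)-\frac{(n-1)na}{2}}x(t-a), & t\ge a,
\end{cases}
\]
with the weight chosen so that $TSx=x$ for all $x\in Y$; because each $x\in Y$ is eventually zero, $Sx$ is again eventually zero, so $S$ maps $Y$ into $Y$ and all iterates stay in $C^\infty(T)$. I would then check the two conditions of Theorem~\ref{SCH}: condition~(1), $TSx=x$, is a direct computation with the weights; for condition~(2) I would show that for fixed $x\in Y$ (hence supported in some $[0,Ma]$) one has $T^nx=0$ for all $n\ge M$, so $T^nx\to 0$ trivially, while $S^nx\to 0$ follows because the forward shift pushes the support of $x$ out to $[na,\infty)$ where the negative exponent $-\,w^{\,\cdot}$ weighting forces the $p$-norm of $S^nx$ to decay geometrically (here $w>1$ and the exponent tends to $-\infty$). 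Combined with the closedness of each $T^n$ from Lemma~\ref{CPLp}, Theorem~\ref{SCH} yields hypercyclicity.

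Next I would establish denseness of periodic points exactly as in Theorem~\ref{Lp}: for each $N\in\N$ and each $x\in\ker T^N\setminus\{0\}$, I would extend $x$ periodically out to $(0,\infty)$ by prescribing $x_N$ on each block $D_k=[kNa,(k+1)Na)$ through the appropriate negative power of $w$ so that $T^N x_N=x_N$, verify $x_N\in L_p(0,\infty)$ via a geometric series (the exponential weight again guaranteeing summability), and then show that for a fixed eventually-zero $x$ the periodic extensions $x_N$ converge to $x$ in $L_p$ as $N\to\infty$. Since such $x$ are dense, $\Per(T)$ is dense, giving chaoticity. The spectral claim \eqref{sp1} is the cleanest part: for \emph{any} $\lambda\in\C$ and any $x\in\ker T\setminus\{0\}$, I would build an eigenvector $x_\lambda$ block-by-block on $[ka,(k+1)a)$ in the manner of \eqref{efbounded}, but now with the extra decaying exponential weight from \eqref{Tn} absorbing the growth of $\lambda^k$, so that $x_\lambda\in L_p$ and $Tx_\lambda=\lambda x_\lambda$ for \emph{every} $\lambda$; this shows $\C\subseteq\sigma_p(T)\subseteq\sigma(T)\subseteq\C$.

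The main obstacle I anticipate is handling the unbounded, $t$-dependent weight $w^{nt+\frac{(n-1)na}{2}}$ carefully enough to confirm the integrability and convergence estimates. Unlike the bounded case, where constancy of the weight $w$ made the block integrals telescope into a clean geometric series in $|\lambda/w|^p$, here each block carries its own exponential factor, and I must check that the interplay between the fixed growth $|\lambda|^k$ (or the periodicity factor $w^{-kN}$) and the super-exponential decay of the weight still produces a convergent series and, for condition~(2), a genuinely vanishing norm. Once it is seen that the exponential weight dominates any fixed geometric factor, all three estimates reduce to convergent geometric or super-geometric series, and the argument closes. The payoff is that this very domination is exactly what makes \emph{every} $\lambda\in\C$ an eigenvalue, in contrast to the disk $|\lambda|<|w|$ of the bounded case.
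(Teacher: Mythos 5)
Your proposal follows essentially the same route as the paper: the dense subspace $Y$ of eventually-zero functions with a right inverse $S$, the Sufficient Condition for Linear Hypercyclicity (Theorem \ref{SCH}) combined with the closedness of powers from Lemma \ref{CPLp}, periodic points obtained by weighted periodic extension of elements of $\ker T^N$, and eigenvectors for \emph{every} $\lambda\in\C$ produced by letting the super-exponential decay of the weight absorb the factor $\lambda^k$. The only slip is your displayed formula for $S$, whose exponent $-(t-a)-\frac{(n-1)na}{2}$ contains a spurious $n$-dependent term (the correct single-step weight is $w^{-(t-a)}$, so that $(TSx)(t)=w^{t}w^{-t}x(t)=x(t)$); since you pin $S$ down by the property $TSx=x$ and the rest of the argument uses only that identity and the decay of $S^{n}$, this does not affect the substance of the proof.
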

    
\begin{proof} 
Let $1\le p<\infty$,  $w>1$, and $a>0$ be arbitrary and, for the simplicity of notation, let $T:=T_{w,a}$.


For the \textit{dense} in $L_p(0,\infty)$ subspace $Y$ of the equivalence classes  represented by $p$-integrable eventually zero functions
(see \eqref{YLpU} and \eqref{kerTnLpU}), we have inclusion \eqref{YLpCinf}.

The mapping 
\[
Y\ni x\mapsto Sx\in Y,
\]
where the equivalence class $Sx$ is represented by
\begin{equation}\label{S}
(Sx)(t):=\begin{cases}
w^{-(t-a)}x(t-a), &t>a,\\ 
0, & \text{otherwise},
\end{cases}
\end{equation}
is well defined since the function $(Sx)(\cdot)$ is eventually zero and, in view of $w>1$,
\begin{align*}
 \int_{0}^{\infty}\left|(Sx)(t)\right|^p\,dt&= \int_{a}^{\infty}\left|w^{-(t-a)p}x(t-a)\right|^p\,dt =\int_{0}^{\infty} w^{-tp}|x(t)|^p\,dt  
 \\
&\le \int_{0}^{\infty} |x(t)|^p\,dt<\infty.
\end{align*}

As is easily seen,
\begin{equation}\label{RI}
\forall\, x\in Y:\ TSx=x.
\end{equation}

Let $x\in  Y$, represented by a $p$-integrable on $(0,\infty)$ eventually zero function $x(\cdot)$, be arbitrary. Then 
\[
\exists\, M\in \mathbb{N}:\ \supp x:=\bar{\left\{t\in (0,\infty)\,\middle|\, x(t)\neq 0\right\}}\subseteq [0,Ma].
\]

By \eqref{Tn},
\begin{equation*}
\forall\, n\ge M:\ T^nx=0,
\end{equation*}
and hence,
\[
T^nx\to 0,\ n\to \infty.
\]

Based on \eqref{S}, inductively,
\begin{equation}\label{Sn}
\begin{aligned}
(S^nx)(t)
&=
\begin{cases}
0, & 0\le t<na,\\
w^{-(t-a)}w^{-(t-2a)}\dots w^{-(t-na)}x(t-na), &t\ge na,
\end{cases}
\\
&=
\begin{cases}
0, & 0\le t<na,\\
w^{-nt+\frac{n(n+1)a}{2}}x(t-na), &t\ge na,\\ 
\end{cases}
\quad x\in Y,n\in \N.
\end{aligned}
\end{equation}

In view of $w>1$, we have:
\begin{align*}
\|S^nx\|_{p}&=
{\left[\int_{0}^{\infty}\left|(S^{n}x)(t)\right|^{p}\,dt \right]}^{1/p}
= {\left[\int_{na}^{\infty} \left|w^{ -nt+\frac{n(n+1)a}{2}}x(t-na)\right|^p\, dt\right]}^{1/p}
\\
&\leq w^{ -n\cdot na+\frac{n(n+1)a}{2}} {\left[\int_{na}^{\infty} \left|x(t-na)\right|^p\, dt\right]}^{1/p}
\\
&= w^{-\frac{n(n-1)a}{2}}{\left[\int_{0}^{\infty} |x(t)|^p\, dt\right]}^{1/p}= w^{-\frac{n(n-1)a}{2}} \|x\|_{p},\ x\in Y,n\in \N.
\end{align*}

Whence, since $w>1$ and $a>0$, we deduce that 
\begin{equation*}
\forall\, x\in Y:\ \lim_{n\to \infty}{\|S^nx\|_p}^{1/n}=0,
\end{equation*}
or equivalently,
\begin{equation}\label{GS}
\forall\, x\in Y,\ \forall\, \alpha\in (0,1)\
\exists\, c=c(x,\alpha)>0\ \forall\, n\in \N:\ \|S^nx\|_{p}\le c\alpha^n \|x\|_{p},
\end{equation}
which implies
\[
\forall\, x\in Y:\ S^nx\to 0,\ n\to \infty.
\]

From the above and the fact that, by the \textit{Closedness of Powers Lemma} (Lemma \ref{CPLp}), each power $T^n$ ($n\in \N$) is a \textit{closed operator}, by the \textit{Sufficient Condition for Hypercyclicity} (Theorem \ref{SCH}), we infer that the operator $T$ is  \textit{hypercyclic}.

To prove that $T$ has a dense set of periodic points, let us first show that each $N\in \N$ is a period for $T$.

Let $N\in\N$ and
\begin{equation}\label{TN}
x\in \ker T^N\setminus \left\{ 0\right\}\subseteq Y\setminus \left\{ 0\right\},
\end{equation}
where
\[
\ker T^N=\left\{f\in L_p(0,\infty)\,\middle|\,  f(t)=0,\ t>Na\right\},
\]
be arbitrary.

By estimate \eqref{GS}
\begin{equation}\label{pN1}
x_N:=\sum_{k=0}^\infty S^{kN}x\in L_p(0,\infty)
\end{equation}
is well defined and, in view of \eqref{Sn}, is represented by the $p$-integrable on $(0,\infty)$ function
\begin{equation*}
x_N(t) := w^{-kNt +\frac{kN(kN+1)a}{2}}x(t-kNa),\ t\in D_k:= [kNa,(k+1)Na),k\in \Z_+.
\end{equation*}

Since, in view of \eqref{TN} and \eqref{RI}, 
\[
\sum_{k=0}^{\infty}T^{N}S^{kN}x=\sum_{k=1}^{\infty}S^{(k-1)N}x=x_N,
\]
by the \textit{closedness} of the operator $T^{N}$, we infer that
\[
x_N\in D(T^{N})\quad \text{and}\quad T^{N}x_N=x_N,
\]
(see, e.g., \cite{Markin2020EOT}), and hence, $x_N$ is an $N$-periodic point for $T$.

Suppose that $x\in Y$ is an arbitrary equivalence class represented by a $p$-integrable on $(0,\infty)$ eventually zero function $x(\cdot)$. Then 
\[
\exists\, M\in \N:\ x(t)=0,\ t>Ma.
\]

Then, for an arbitrary period $N\ge M$, \eqref{TN} holds and there exists
an $N$-periodic point $x_N$ for the operator $T$ defined based on $x$ by \eqref{pN1}. By estimate \eqref{GS},
\begin{align*}
\left\|x_N-x\right\|
&=\left\|\sum_{k=1}^\infty S^{kN}x\right\|_p
\le \sum_{k=1}^\infty  \left\|S^{kN}x\right\|_p
\le c\sum_{k=1}^\infty {\left(\alpha^N\right)}^k \|x\|_{p}\\
&=c\frac{\alpha^N}{1-\alpha^N}\|x\|_{p}\to 0,\ N\to \infty.
\end{align*}

Whence, in view of the denseness of $Y$ in $L_{p}(0,\infty)$, we infer that the set $\Per(T)$ of periodic points of $T$ is dense in $L_{p}(0,\infty)$ as well, and hence, the operator $T$ is \textit{chaotic}.

Now, assuming that the space $L_{p}(0,\infty)$ is complex, let us prove \eqref{sp1}.

Let $\lambda\in \mathbb{C}$ and 
\begin{equation}\label{T}
x\in \ker T\setminus \left\{ 0\right\}\subseteq Y\setminus \left\{ 0\right\},
\end{equation}
where
\[
\ker T=\left\{f\in L_p(0,\infty)\,\middle|\,  f(t)=0,\ t>a\right\},
\]
be arbitrary.

By estimate \eqref{GS}, for 
\[
\alpha:=(|\lambda|+1)^{-1}\in (0,1),
\]
we have:
\begin{equation}\label{GSS}
\exists\, c=c(x,\alpha)>0\ \forall\, k\in \N:\
\|\lambda^kS^kx\|_p\le |\lambda|^kc\alpha^k\|x\|_{p}=c{\left(|\lambda|\alpha\right)}^k\|x\|_{p},
\end{equation}
where $0\le |\lambda|\alpha^k=|\lambda|(|\lambda|+1)^{-k}<1$.

By estimate \eqref{GSS},
\begin{equation*}
x_\lambda:=\sum_{k=0}^\infty \lambda^kS^{k}x\in L_p(0,\infty)
\end{equation*}
is well defined and, in view of \eqref{Sn}, is represented by the $p$-integrable on $(0,\infty)$ function
\begin{equation*}
x_\lambda(t):=\lambda^{k}w^{-kt+\frac{k(k+1)a}{2}}x(t-ka),\ t\in [ka,(k+1)a),k\in \Z_+,\quad (0^0:=1).
\end{equation*}

Since
\[
\|x_\lambda\|_p^p=\int_{0}^{\infty} |x_\lambda(t)|^{p}\,dt
\ge \int_{0}^a |x_\lambda(t)|^{p}\,dt= \int_{0}^a |x(t)|^{p}\,dt
= \int_{0}^\infty |x(t)|^{p}\,dt=\|x\|_p^p>0,
\]
we infer that $x_\lambda\neq 0$.

Further, since, in view of \eqref{T} and \eqref{RI}, 
\[
\sum_{k=0}^{\infty}T(\lambda^k S^{k}x)
=\lambda \sum_{k=1}^{\infty}\lambda^{k-1}S^{k-1}=\lambda x_\lambda,
\]
by the \textit{closedness} of the operator $T$, we conclude that
\[
x_\lambda\in D(T)\quad \text{and}\quad Tx_\lambda=\lambda x_\lambda,
\]
(see, e.g., \cite{Markin2020EOT}).

Thus, $\lambda\in \sigma_p(T)$ and $x_{\lambda}$ is an eigenvector of $T$ associated with $\lambda$, which proves \eqref{sp1}. 
\end{proof}

\section{Bounded Weighted Translations on $C_0[0,\infty)$}

In \cite[Theorem $2.3$]{Aron-Seoan-Weber2005}, it is shown that, on the (real or complex) space $C_0[0,\infty)$, the bounded linear weighted left translation operator
\begin{equation*}
(T_{w,a}x)(t):=wx(t+a),\ t\ge 0,
\end{equation*}
with $|w|>1$ and $a>0$ is \textit{chaotic} and
\[
\left\{\lambda\in \C\,\middle|\, 0<|\lambda|< |w| \right\}\subseteq \sigma_p(T)
\]
based on the simple fact that, for each $\lambda \in \C$, $\Rep\lambda<0$, 
the equation
\[
T_{w,a}x=we^{a\lambda}x
\]
is satisfied by the function
\[
x(t):=e^{\lambda t},\ t\ge 0.
\]
It is also stated (without proof) that one can show that
\[
\sigma_p(T_{w,a})=\left\{\lambda\in \C\,\middle|\, |\lambda|< |w| \right\}.
\]

Here, we completely describe the spectrum of such operators.

\begin{prop}[Spectrum]\label{C0S}\ \\
On the complex space $C_0[0,\infty)$, for the bounded linear weighted left translation operator
\begin{equation*}
(Tx)(t):=w x(t+a),\ t\ge 0,
\end{equation*}
where $w\in \C$ with $|w|>1$ and $a>0$,
\begin{equation}\label{spc0}
\sigma(T)=\left\{\lambda\in \C\,\middle|\, |\lambda|\le |w| \right\}
\end{equation}
with
\begin{equation}\label{pcsp0}
\sigma_p(T_{w,a})=\left\{\lambda\in \C\,\middle|\, |\lambda|< |w| \right\}\quad 
\text{and}\quad
\sigma_c(T_{w,a})=\left\{\lambda\in \C\,\middle|\, |\lambda|= |w| \right\}.
\end{equation}
\end{prop}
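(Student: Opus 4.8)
The plan is to follow the blueprint of the proof of Theorem \ref{Lp}, adapting the eigenvector construction to the continuity and decay demands of $C_0[0,\infty)$. Since $T=wB$ with $B$ the unweighted backward shift of norm $1$, one has $\|T\|=|w|$, and hence by Gelfand's Spectral Radius Theorem $\sigma(T)\subseteq\{\lambda\in\C:|\lambda|\le|w|\}$, which settles the outer bound in \eqref{spc0}.

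Next I would pin down the point spectrum. The eigenvalue equation $Tx=\lambda x$ reads $wx(t+a)=\lambda x(t)$, i.e.
\[
x(t+a)=\frac{\lambda}{w}\,x(t),\qquad t\ge 0,
\]
so a solution is completely determined by its restriction $g:=x|_{[0,a]}$ through $x(t)=(\lambda/w)^{k}g(t-ka)$ for $t\in[ka,(k+1)a)$ (with the convention $0^0:=1$), exactly as in \eqref{efbounded}. The new feature, absent in the $L_p$ setting, is that membership in $C_0[0,\infty)$ imposes two requirements: continuity at the junction points $t=ka$, which amounts to the single boundary constraint $g(a)=(\lambda/w)g(0)$, and vanishing at infinity. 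For $|\lambda|<|w|$ I would take any nonzero continuous $g$ satisfying this constraint (e.g.\ the linear interpolation between $g(0)=1$ and $g(a)=\lambda/w$; for $\lambda=0$ one takes $g(a)=0$, yielding an eventually-zero eigenvector) and observe that on $[ka,(k+1)a)$ one has $|x(t)|\le|\lambda/w|^{k}\|g\|_\infty\to 0$, so $x\in C_0[0,\infty)\setminus\{0\}$ is an eigenvector; thus $\{|\lambda|<|w|\}\subseteq\sigma_p(T)$, including $\lambda=0$, which the exponential construction of \cite{Aron-Seoan-Weber2005} misses. For the converse, if $|\lambda|=|w|$ and $Tx=\lambda x$ with $x\ne 0$, then $|x(t+a)|=|\lambda/w|\,|x(t)|=|x(t)|$, so the modulus of $x$ repeats with period $a$; choosing $s_0\in[0,a)$ with $x(s_0)\ne 0$ forces $|x(s_0+ka)|=|x(s_0)|>0$ for all $k$, contradicting $x\in C_0[0,\infty)$. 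Combined with the spectral radius bound, this yields $\sigma_p(T)=\{|\lambda|<|w|\}$.

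It then remains to assemble the spectral pieces. Because $\sigma(T)$ is closed and already contains the open disk $\sigma_p(T)=\{|\lambda|<|w|\}$, it must contain its closure $\{|\lambda|\le|w|\}$; together with the outer bound this gives \eqref{spc0}. For the residual spectrum I would invoke that $T$ is hypercyclic on $C_0[0,\infty)$ by \cite[Theorem $2.3$]{Aron-Seoan-Weber2005}, so by \cite[Lemma $2.53$]{Grosse-Erdmann-Manguillot} the operator $T-\lambda I$ has dense range for every $\lambda\in\C$; hence $\sigma_r(T)=\emptyset$, exactly as in the proof of Theorem \ref{Lp}. Since $\sigma(T)$ is the disjoint union $\sigma_p(T)\cup\sigma_c(T)\cup\sigma_r(T)$, removing the open disk and the empty residual part leaves $\sigma_c(T)=\{|\lambda|=|w|\}$, establishing \eqref{pcsp0}.

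The main obstacle, and the only place where the argument genuinely departs from the $L_p$ case, is the characterization of the point spectrum inside $C_0[0,\infty)$: one must simultaneously enforce continuity of the recursively built eigenfunction (handled by the boundary condition $g(a)=(\lambda/w)g(0)$) and its decay at infinity, and it is precisely this decay that, on the critical circle $|\lambda|=|w|$, is obstructed by the periodicity of the modulus. Once these two points are settled, the spectral radius estimate, the closedness of the spectrum, and the hypercyclicity-driven vanishing of $\sigma_r(T)$ proceed just as before.
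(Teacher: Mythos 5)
Your proposal is correct and follows essentially the same route as the paper's proof: the Gelfand spectral radius bound for the outer inclusion, construction of eigenvectors from an arbitrary seed on $[0,a]$ satisfying the matching condition $g(a)=(\lambda/w)g(0)$ and decaying geometrically across the blocks $[ka,(k+1)a)$, exclusion of eigenvalues with $|\lambda|\ge|w|$ via the vanishing-at-infinity requirement, closedness of the spectrum to fill in the disk, and hypercyclicity plus dense range of $T-\lambda I$ to empty the residual spectrum. The only cosmetic differences are your choice of a linear interpolant for the seed (the paper allows any admissible seed and exhibits an exponential one) and your phrasing of the converse as ruling out the circle $|\lambda|=|w|$ rather than deriving $|\lambda|<|w|$ directly from the decay.
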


\begin{proof}
Let $w\in \C$ with  $w>1$ and $a>0$ be arbitrary and, for the simplicity of notation, let $T:=T_{w,a}$.

Since
\[
T=wB,
\]
where 
\begin{equation*}
(Bx)(t):=x(t+a),\ x\in C_0[0,\infty),\, t\ge 0,
\end{equation*}
is a left translation with $\|B\|=1$, and hence,
\begin{equation*}
\|T\|=|w|\|B\|=|w|,
\end{equation*} 
by \textit{Gelfand's Spectral Radius Theorem} \cite{Markin2020EOT},
\begin{equation}\label{spinclc0}
\sigma(T)\subseteq \left\{\lambda\in \C\,  \middle |\, |\lambda|\leq |w| \right\}.
\end{equation}

Let $\lambda\in \mathbb{C}$ with $|\lambda|<|w|$ and a nonzero $x\in C[0,a]$, with  
\[
x(a)=\dfrac{\lambda}{w}x(0)
\]
be arbitrary. E.g., for $0<|\lambda|<|w|$,
\[
y(t):=e^{ct},\ t\in [0,a],
\]
with $c:=\frac{1}{a}\ln\frac{\lambda}{w}=\frac{1}{a}\left(\ln\left|\frac{\lambda}{w}\right|+i\Imp\frac{\lambda}{w} \right)$ ($i$ is the \textit{imaginary unit}). 

Then, as is readily verified,
\begin{equation}\label{efboundedc0}
x_\lambda(t):=\left(\dfrac{\lambda}{w}\right)^{k}x(t-ka),\
t\in [ka,(k+1)a),k\in \Z_+,\ (0^0:=1)
\end{equation}
is a nonzero function continuous on $[0,\infty)$.

Since, in view of $|\lambda/w|<1$, for any $k\in \Z_+$,
\begin{equation*}
\max_{ka\le t\le(k+1)a}|x_\lambda(t)|=\max_{ka\le t\le(k+1)a} \left|\left(\dfrac{\lambda}{w}\right)^{k}x(t-ka)\right|
= \left|\dfrac{\lambda}{w}\right|^{k}\max_{0\le t\le a}\left|x(t) \right| \to 0,\, k\to\infty,
\end{equation*}
we infer that $x_\lambda\in C_{0}[0,\infty)\setminus \left\{ 0\right\}$. 

Also,
\begin{align*}
(Tx_\lambda)(t)&=wx(t+a)=w\left(\dfrac{\lambda}{w}\right)^{k}x(t+a-ka)
\\
&=\lambda\left(\dfrac{\lambda}{w}\right)^{k-1}x(t-(k-1)a),\ t\in [(k-1)a,ka),k\in \N,
\end{align*}
which implies that
\begin{equation}\label{EV1}
Tx_\lambda=\lambda x_\lambda.
\end{equation}

Thus, $\lambda\in \sigma_p(T)$.

Conversely, let $\lambda\in \sigma_p(T)$ be an arbitrary eigenvalue for $T$ with an associated eigenvector $x_{\lambda}\in C_0[0,\infty)\setminus \left\{0 \right\}$. Then, for
\[
x_k(t):=x_{\lambda}(t),\ t\in [ka,(k+1)a), k\in \Z_+,
\]
by \eqref{EV1}, we have:
\[
\lambda x_{k-1}(t)=wx_{k}(t+a),\ t\in [ka,(k+1)a), k\in \N.
\]
Whence,
\[
x_k(t)=\left(\dfrac{\lambda}{w}\right)^{k}x_{\lambda}(t-ka),\ t\in [ka,(k+1)a), k\in \Z_+,
\]
which, in view of $x_{\lambda}\ne 0$, implies that
\[
0<\max_{0\le t\le a}|x_{\lambda}(t)| \le \sup_{t\ge 0}|x_{\lambda}(t)|<\infty.
\]

Since
\[
\lim_{t\to \infty}x_\lambda(t)=0
\]
implies
\[
{\left|\dfrac{\lambda}{w}\right|}^k\max_{0\le t\le a}|x_{\lambda}(t)|\to 0,\ k\to\infty,
\]
which, in its turn, means that 
\[
|\lambda|<|w|.
\]

Thus, $x_\lambda$ is of the form given by \eqref{efboundedc0}, where $x$ is the restriction to $[0,a]$ of $x_\lambda$.

The above proves that
\begin{equation}\label{pspc0}
\sigma_p(T)=\left\{\lambda\in \C\,  \middle |\, |\lambda|<|w| \right\}.
\end{equation}

Considering that $\sigma(T)$ is a \textit{closed} set in $\C$ (see, e.g., \cite{Dun-SchI,Markin2020EOT}), we infer from \eqref{spinclc0} and \eqref{pspc0} that \eqref{spc0} holds.

Since, by \cite[Lemma $2.53$]{Grosse-Erdmann-Manguillot}, the hypercyclicity of $T$ implies the operator $T-\lambda I$ has a \textit{dense range} for all $\lambda\in \C$, we infer that 
\[
\sigma_{r}(T)=\emptyset
\]
(cf. \cite[Proposition $4.1$]{arXiv:2106.14872}, \cite[Lemma $1$]{MarkSich2019(1)}), and hence, in view of \eqref{spc0} and \eqref{pspc0}, we conclude that
\[
\sigma_c(T)=\left\{\lambda\in \C\,  \middle |\, |\lambda|=|w| \right\}.
\] 

Thus, \eqref{pcsp0} holds as well.
\end{proof}

\section{Unbounded Weighted Translations in $C_0[0,\infty)$}

\begin{lem}[Closedness of Powers]\label{CPC0}\ \\
In the (real or complex) space $\left(C_0[0,\infty),\|\cdot\|_\infty\right)$, for the weighted left translation
\begin{equation*}
(T_{w,a}x)(t):=w^{t} x(t+a),\ t\ge 0,
\end{equation*}
with $w>1$, $a>0$, and domain 
\begin{equation*}
D(T_{w,a}):=\left\{ x\in C_{0}[0,\infty) \middle |  \lim_{t\to \infty} w^{t} x(t+a) =0 \right\},
\end{equation*}
each power $T_{w,a}^n$ ($n\in \N$) is a densely defined unbounded closed linear operator.
\end{lem}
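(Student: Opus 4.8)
The plan is to follow the proof of the $L_p$ counterpart (Lemma \ref{CPLp}) nearly verbatim, modifying only the two spots where the sup-norm topology of $C_0[0,\infty)$ differs from the $L_p$ setting: the choice of test vectors witnessing unboundedness and the limiting argument for closedness.

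First, linearity is obvious and passes to every power. Proceeding inductively exactly as in \eqref{Tn}, I obtain
\[
(T^n x)(t) = w^{nt+\frac{(n-1)na}{2}} x(t+na),\ t\ge 0,
\]
with
\[
D(T^n) = \left\{ x\in C_0[0,\infty) \,\middle|\, \lim_{t\to\infty} w^{nt+\frac{(n-1)na}{2}} x(t+na) = 0 \right\}.
\]
Dense definition then follows as before: the subspace $Y = \bigcup_{m=1}^\infty \ker T^m$, where $\ker T^m = \{f\in C_0[0,\infty) \mid f(t)=0,\ t>ma\}$, consists of continuous functions of compact support, which are dense in $\left(C_0[0,\infty),\|\cdot\|_\infty\right)$; and $Y\subset C^\infty(T)$ because each such function is annihilated by all sufficiently high powers of $T$.

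For unboundedness, the characteristic functions $\chi_{[m,m+1]}$ used in the $L_p$ proof are inadmissible here, being discontinuous. Instead I would take continuous \emph{tent} functions $e_m$ supported on $[m,m+1]$ with $\|e_m\|_\infty=1$, peaking at $t=m+\tfrac12$. Each $e_m$ is eventually zero, hence lies in $D(T^n)$, and evaluating $T^n e_m$ at the abscissa where the shifted tent attains its maximum yields, for all $m\ge na$ and in view of $w>1$,
\[
\|T^n e_m\|_\infty \ge w^{n(m+\frac12-na)+\frac{(n-1)na}{2}} \to \infty,\ m\to\infty,
\]
so $T^n$ is unbounded.

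The closedness step is actually cleaner in the sup-norm setting than in $L_p$, since no Riesz-theorem passage to an a.e.-convergent subsequence is needed. Suppose $D(T^n)\ni x_m\to x$ and $T^n x_m\to y$, both in $\|\cdot\|_\infty$. Uniform convergence entails pointwise convergence, so for each fixed $t\ge 0$,
\[
(T^n x_m)(t) = w^{nt+\frac{(n-1)na}{2}} x_m(t+na) \to w^{nt+\frac{(n-1)na}{2}} x(t+na);
\]
but also $(T^n x_m)(t)\to y(t)$, whence, by uniqueness of pointwise limits, $w^{nt+\frac{(n-1)na}{2}} x(t+na)=y(t)$ for every $t\ge 0$. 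Since $y\in C_0[0,\infty)$, the left-hand side vanishes at infinity, which is precisely the membership condition for $D(T^n)$; hence $x\in D(T^n)$ and $T^n x=y$. By the \emph{Sequential Characterization of Closed Linear Operators}, $T^n$ is closed. I anticipate no genuine obstacle: the only point requiring attention is that, unlike the $L_p$ proof, uniform convergence furnishes the pointwise limit directly, rendering the measure-theoretic machinery unnecessary.
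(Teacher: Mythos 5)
Your proposal is correct and follows essentially the same route as the paper: the same inductive formula for $T^n$ and its domain, the same dense subspace $Y=\bigcup_{m}\ker T^m$ of eventually zero continuous functions, and the same uniform-convergence-implies-pointwise-convergence argument for closedness (the paper likewise dispenses with the Riesz-theorem machinery of the $L_p$ case). The only immaterial difference is the choice of unit-norm witnesses for unboundedness --- your tent functions on $[m,m+1]$ versus the paper's functions equal to $1$ on $[0,ma)$ and to $w^{-(t-ma)^2}$ thereafter --- and both yield $\|T^ne_m\|_\infty\to\infty$.
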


\begin{proof}
Let $w>1$, $a>0$, and $n\in \N$ be arbitrary and, for the simplicity of notation, let $T:=T_{w,a}$.

The \textit{linearity} of $T$ is obvious and implies that for $T^n$.

Inductively,
\begin{equation}\label{Tn2}
(T^nx)(t)=w^tw^{t+a}\dots w^{t+(n-1)a}x(t+na)
=w^{nt+\frac{(n-1)na}{2}}x(t+na),\ t\ge 0,
\end{equation}
and
\begin{equation}\label{DTn2}
D(T^n)=\left\{ x\in C_{0}[0,\infty)\, \middle |\,  \lim_{t\to \infty} w^{nt+\frac{(n-1)na}{2}}x(t+na)=0 \right\}
\end{equation}
(cf. \eqref{Tn} and \eqref{DTn}).

By the denseness in $C_0[0,\infty)$ ($1\le p<\infty$) of the subspace 
\begin{equation}\label{YC0U}
Y:=\bigcup_{m=1}^\infty \ker T^m,
\end{equation}
where
\begin{equation}\label{kerTnC0U}
\ker T^m=\left\{f\in C_0[0,\infty)\,\middle|\,  f(t)=0,\ t\ge ma\right\},\ m\in \N,
\end{equation}
of the equivalence classes represented by $p$-integrable on $(0,\infty)$ eventually zero functions and the inclusion
\begin{equation}\label{YC0Cinf}
Y\subset C^\infty(T):=\bigcap_{m=1}^\infty D(T^m),
\end{equation}
which follows from \eqref{DTn2}, we infer that the operator $T^n$ is \textit{densely defined}.

The  \textit{unboundedness} of $T^n$ follows from the fact that, for 
\[
e_m(t):=
\begin{cases}
1,& 0\le t<ma,\\
w^{-(t-ma)^2}, & t\ge ma,
\end{cases}
\quad m\in \N
\] 
we have:
\[
e_n\in D(T),\ \|e_n\|_{\infty}=1,\ m\in \N,
\]
and, for all $m\ge n$, in view of $w>1$, 
\begin{align*}
\|T^ne_m\|_\infty&=\sup_{t\ge 0}\left|w^{nt+\frac{(n-1)na}{2}}e_m(t+na) \right|
\\
&\ge w^{nt+\frac{(n-1)na}{2}}e_m(t+na)\bigr|_{t=ma-na}
\\
&\ge w^{n(ma-na)+\frac{(n-1)na}{2}}\to \infty,\ m\to \infty.
\end{align*}

Let a sequence $(x_m)_{m\in \N}$ in $C_{0}[0,\infty)$ be such that 
\[
D(T^n)\ni x_m\to x\in C_0[0,\infty),\ m\to \infty,
\]
and
\[
T^nx_m\to y\in C_0[0,\infty),\ m\to \infty.
\]

Then, for each $t\ge 0$,
\begin{equation}\label{pw}
x_m(t)\to x(t)\ \text{and}\
(T^nx_m)(t)\to y(t),\ m\to \infty.
\end{equation}

By \eqref{pw}, for each $t\ge 0$,
\[
(T^nx_m)(t)=w^{nt+\frac{(n-1)na}{2}}x_{m}(t+ma)\to w^{nt+\frac{(n-1)na}{2}}x(t+na),\ m\to \infty,
\]
and
\[
w^{nt+\frac{(n-1)na}{2}}x(t+na)=y(t),\ t\ge 0,
\]
which implies
\[
x\in D(T^n)\quad \text{and}\quad T^nx=y.
\]

Thus, by the \textit{Sequential Characterization of Closed Linear Operators} (see, e.g., \cite{Markin2020EOT}) the operator $T^n$ is \textit{closed}.
\end{proof}

\begin{thm}[Unbounded Weighted Translations in $C_{0}[0,\infty)$]\label{C0U}\ \\
In the (real or complex) space $\left(C_0[0,\infty),\|\cdot\|_\infty\right)$, the weighted left translation
\begin{equation*}
(T_{w,a}x)(t):=w^{t} x(t+a),\ t\ge 0,
\end{equation*}
with $w>1$, $a>0$, and domain 
\begin{equation*}
 D(T_{w,a}):=\left\{ x\in C_0[0,\infty) \middle |  \lim_{t\to \infty} w^{t} x(t+a) =0 \right\}
\end{equation*}
is a chaotic unbounded linear operator. 

Furthermore, provided the underlying space is complex,
\begin{equation}\label{sp2}
\sigma(T_{w,a})=\sigma_p(T_{w,a})=\mathbb{C}.
\end{equation}
\end{thm}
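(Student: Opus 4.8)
The plan is to mirror the proof of Theorem \ref{LpU}, substituting the Sufficient Condition for Linear Hypercyclicity (Theorem \ref{SCH}) for the bounded-operator Birkhoff argument and then exhibiting periodic points and eigenvectors as uniformly convergent series built from a right inverse of $T$. As the dense testing subspace I would take $Y=\bigcup_{m=1}^\infty\ker T^m$ from \eqref{YC0U}--\eqref{kerTnC0U}: by Lemma \ref{CPC0} it is dense in the infinite-dimensional separable Banach space $C_0[0,\infty)$ and, by \eqref{YC0Cinf}, is contained in $C^\infty(T)$, while the same lemma supplies the closedness of every power $T^n$; thus the structural hypotheses of Theorem \ref{SCH} hold. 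For $x\in Y$, which vanishes past some $ma$, formula \eqref{Tn2} gives $T^nx=0$ for all $n\ge m$, so $T^nx\to 0$ is automatic.

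The one place the argument genuinely departs from the $L_p$ case is the construction of $S\colon Y\to Y$ with $TSx=x$. The identity $TSx=x$ forces $(Sx)(t)=w^{-(t-a)}x(t-a)$ for $t\ge a$, but on $[0,a)$ there is latitude, and in $C_0[0,\infty)$ continuity is now a real constraint: since $(Sx)(a)=x(0)$ need not vanish, the choice $(Sx)\equiv 0$ on $[0,a)$ used in \eqref{S} creates a jump and escapes the space. I would close the gap with a continuous filler on $[0,a]$, e.g. the linear ramp $(Sx)(t):=\tfrac{t}{a}\,x(0)$, which renders $Sx$ continuous and eventually zero (hence $Sx\in Y$) without disturbing $TSx=x$, since that identity only ever reads off $Sx$ on $[a,\infty)$.

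With $S$ so defined, the crux is the decay estimate $\|S^nx\|_\infty\to 0$. Under iteration the clean shifted part of $S^nx$ lives on $[na,\infty)$ and carries the weight $w^{-nt+\frac{n(n+1)a}{2}}$ of \eqref{Sn}, bounded by $w^{-\frac{n(n-1)a}{2}}\|x\|_\infty$; the filler, meanwhile, produces a single bump that is pushed one block to the right and multiplied by an extra factor $w^{-(t-a)}$ at each application, so after $n$ steps it is supported on $[(n-1)a,na]$ and bounded by a constant multiple of $w^{-\frac{(n-1)(n-2)a}{2}}\|x\|_\infty$. Both pieces decay super-exponentially, giving $\lim_n\|S^nx\|_\infty^{1/n}=0$ and hence the analogue of \eqref{GS}; I expect this bump estimate to be the main technical obstacle, as it has no counterpart in $L_p(0,\infty)$. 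Granting it, $S^nx\to 0$ and Theorem \ref{SCH} yields hypercyclicity, after which the rest transcribes the proof of Theorem \ref{LpU}. For $x\in\ker T^N$ the series $x_N:=\sum_{k\ge0}S^{kN}x$ converges uniformly, so its continuous summands produce a continuous $N$-periodic point (via $TS=I$ on $Y$ and the closedness of $T^N$), and $\|x_N-x\|_\infty\to 0$ as $N\to\infty$ makes $\Per(T)$ dense, so $T$ is chaotic. Finally, for any $\lambda\in\C$ pick a seed $x\in\ker T\setminus\{0\}$ with $x(0)=0$ (so the filler vanishes); then $x_\lambda:=\sum_{k\ge0}\lambda^kS^kx$ converges uniformly upon feeding $\alpha:=(|\lambda|+1)^{-1}$ into \eqref{GS}, its restriction to $[0,a]$ is exactly $x\ne0$, and the closedness of $T$ gives $Tx_\lambda=\lambda x_\lambda$. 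Thus $\sigma_p(T)=\C$, and since $\sigma_p(T)\subseteq\sigma(T)\subseteq\C$ this establishes \eqref{sp2}.
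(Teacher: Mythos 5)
Your proposal is correct and follows essentially the same route as the paper: the same dense subspace $Y$ of eventually zero functions, the same right inverse $S$ with the linear ramp $\tfrac{x(0)}{a}t$ on $[0,a)$ (this is exactly the paper's \eqref{S2}), the same super-exponential decay estimate for $S^n$ (your separate bound $w^{-\frac{(n-1)(n-2)a}{2}}\|x\|_\infty$ for the ramp's bump is, if anything, slightly more careful than the paper's single displayed inequality), and the same series constructions $x_N=\sum_k S^{kN}x$ and $x_\lambda=\sum_k\lambda^kS^kx$ for periodic points and eigenvectors. The only cosmetic difference is your choice of an eigenvector seed with $x(0)=0$ to keep the filler out of $x_\lambda|_{[0,a]}$, a detail the paper subsumes under "identical to Theorem \ref{LpU}."
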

    
\begin{proof}
Let $w>1$ and $a>0$ be arbitrary and, for the simplicity of notation, let $T:=T_{w,a}$.

For the \textit{dense} in $C_0[0,\infty)$ subspace $Y$ of eventually zero functions (see \eqref{YC0U} and \eqref{kerTnC0U}), we have inclusion \eqref{YC0Cinf}.

The mapping 
\[
Y\ni x\mapsto Sx\in Y,
\]
where
\begin{equation}\label{S2}
(Sx)(t):=\begin{cases}
\frac{x(0)}{a}t,  &0\le t<a,\\
w^{-(t-a)}x(t-a), &t\geq a,\\ 
\end{cases}
\end{equation}
is well defined since the function $Sx$ is eventually zero and, as is easily seen,
\begin{equation}\label{RI2}
\forall\, x\in Y:\ TSx=x.
\end{equation}

Let $x\in  Y$ be arbitrary. Then 
\[
\exists\, M\in \mathbb{N}:\ \supp x:=\bar{\left\{t\in [0,\infty)\,\middle|\, x(t)\neq 0\right\}}\subseteq [0,Ma].
\]

By \eqref{Tn2},
\begin{equation*}
\forall\, n\ge M:\ T^nx=0,
\end{equation*}
and hence,
\[
T^nx\to 0,\ n\to \infty.
\]

Based on \eqref{S2}, inductively,
\begin{equation}\label{Sn2}
\begin{aligned}
&(S^nx)(t)
\\
&=
\begin{cases}
0,&0\le t <(n-1)a,\\
w^{-(t-a)}\dots w^{-(t-(n-1)a)}\frac{x(0)}{a}(t-(n-1)a), &(n-1)a\le t<na,\\
w^{-(t-a)}\dots w^{-(t-na)}x(t-na), & t\geq na,
\end{cases}
\\
&=
\begin{cases}
0,&0\le t <(n-1)a,\\
w^{-(t-a)}\frac{x(0)}{a}(t-(n-1)a), &(n-1)a\le t<na,\\
w^{-nt+\frac{n(n+1)a}{2}}x(t-na), & t\geq na,
\end{cases}
\quad x\in Y,n\in \N.
\end{aligned}
\end{equation}

In view of $w>1$,
\begin{equation*}
\|S^nx\|_{\infty}=\sup_{t\ge 0} \left|(S^nx)(t)\right|
\le w^{ -n\cdot na+\frac{n(n+1)a}{2}} \|x\|_{\infty}
=w^{-\frac{n(n-1)a}{2}}\|x\|_{\infty},\ x\in Y,n\in \N.
\end{equation*}

Whence, since $w>1$ and $a>0$, we deduce that 
\begin{equation*}
\forall\, x\in Y:\ \lim_{n\to \infty}{\|S^nx\|_\infty}^{1/n}=0,
\end{equation*}
or equivalently,
\begin{equation}\label{GS2}
\forall\, x\in Y,\ \forall\, \alpha\in (0,1)\
\exists\, c=c(x,\alpha)>0\ \forall\, n\in \N:\ \|S^nx\|_\infty\le c\alpha^n \|x\|_\infty,
\end{equation}
which implies
\[
\forall\, x\in Y:\ S^nx\to 0,\ n\to \infty.
\]

From the above and the fact that, by the \textit{Closedness of Powers Lemma} (Lemma \ref{CPC0}), each power $T^n$ ($n\in \N$) is a \textit{closed operator}, by the \textit{Sufficient Condition for Hypercyclicity} (Theorem \ref{SCH}), we infer that the operator $T$ is  \textit{hypercyclic}.

Based on estimate \eqref{GS2}, proving that $T$ has a dense set of periodic points, and hence, is \textit{chaotic} and that \eqref{sp2} holds is identical to proving the same parts in Theorem \ref{LpU}.
\end{proof}

\section{Concluding Remarks}

The foregoing results are consistent with the recent findings of 
\cite{arXiv:2106.14872}. According to the latter,
under the premises of Theorem \ref{Lp}, Theorem \ref{LpU}, \cite[Theorem $2.3$]{Aron-Seoan-Weber2005}, or Theorem \ref{C0U}, not only 
is the operator $T_{w,a}$ \textit{chaotic} but also its every power $T_{w,a}^n$ ($n\in \N$) and, furthermore, 
\[
\dim\ker\left(T_{w,a}^n-\lambda I\right)=\dim\ker T_{w,a}^n
=\dim\left\{f\in X\,\middle|\,  f(t)=0,\ t>na\right\},
\]
where $X:=L_p(0,\infty)$ ($1\le p<\infty$) or $X:=C_0[0,\infty)$, holds in Theorem \ref{Lp} and Proposition \ref{C0S} for all $n\in \N$ and $\lambda\in \C$ with $|\lambda|<{|w|}^n$ and in Theorems \ref{LpU} and \ref{C0U} for all $n\in \N$ and $\lambda\in \C$, i.e., all eigenvalues of $T_{w,a}^n$ are of the same geometric multiplicity.





\begin{thebibliography}{99}
\bibitem{Rolewicz}
{S. Rolewicz},	
\textit{On orbits of elements},	
{Studia Math.}	
\textbf{32}	
{(1969)},	
{17--22}.	
\bibitem{arXiv:1811.06640}
{M.V. Markin},	
\textit{On the chaoticity and spectral structure of Rolewicz-type unbounded operators},	
arXiv:1811.06640.
\bibitem{Aron-Seoan-Weber2005}
{R. M. Aron, J. B. Seoan-Sep\'ulveda, and A. Weber},	
\textit{Chaos on function spaces},	
{Bull. Austral. Math. Soc.}	
\textbf{71}	
{(2005)},	
{411-415}.	
\bibitem{Devaney}
{R.L. Devaney},	
\textit{An Introduction to Chaotic Dynamical Systems}, 
{2nd ed.},	
{Addison-Wesley},	
{New York},	
{1989}.		
\bibitem{Godefroy-Shapiro1991}
{G. Godefroy and J.H. Shapiro},	
\textit{Operators with dense, invariant, cyclic vector manifolds},	
{J. Funct. Anal.}	
\textbf{98}	
{(1991)},	
{229--269}.	
\bibitem{B-Ch-S2001}
{J. B\`es, K.C. Chan, and S.M. Seubert},	
\textit{Chaotic unbounded differentiation operators},	
{Integral Equations Operator Theory}	
\textbf{40}	
{(2001)},	
{no.~3},	
{257--267}.	
\bibitem{deL-E-G-E2003}
{R. deLaubenfels, H. Emamirad, and K.-G. Grosse-Erdmann},	
\textit{Chaos for 
semigroups of unbounded operators},	
{Math. Nachr.}	
\textbf{261/262}	
{(2003)},	
{no.~3},	
{47--59}.	
\bibitem{Bayart-Matheron}
{F. Bayart and \'E. Matheron},	
\textit{Dynamics of Linear Operators}, 
{Cambridge University Press},	
{Cambridge},	
{2009}.		
\bibitem{Grosse-Erdmann-Manguillot}
{K.-G. Grosse-Erdmann and A.P. Manguillot},	
\textit{Linear Chaos}, 
{Universitext},	
{Springer-Verlag},	
{London},		
{2011}.		
\bibitem{arXiv:1812.02294}
{M.V. Markin},	
\textit{On general construct of chaotic unbounded linear operators in Banach spaces with Schauder bases}, 
arXiv:1812.02294.
\bibitem{Kitai1982}
{C. Kitai},	
\textit{Invariant Closed Sets for Linear Operators},	
{Ph.D. Thesis},	
{University of Toronto},	
{1982}.		
\bibitem{Gethner-Shapiro1987}
{R.M. Gethner and J.H. Shapiro},	
\textit{Universal vector for operators on spaces of holomorphic functions},	
{Proc. Amer. Math. Soc.}	
\textbf{100}	
{(1987)},	
{no.~2},	
{281--288}.	
\bibitem{Dun-SchI}
{N. Dunford and J.T. Schwartz with the assistance of W.G. Bade and R.G. Bartle},	
\textit{Linear Operators. Part I: General Theory},	
{Interscience Publishers},	
{New York},		
{1958}.		
\bibitem{Markin2020EOT}
{M.V. Markin},	
\textit{Elementary Operator Theory}, 
{De Gruyter Graduate},	
{Walter de Gruyter GmbH},	
{Berlin/Boston}, 
{2020}.	
\bibitem{MarkinRA}
{M.V. Markin},	
\textit{Real Analysis. Measure and Integration}, 
{De Gruyter Graduate},	
{Walter de Gruyter GmbH},	
{Berlin/Boston}, 
{2019}. 
\bibitem{arXiv:2106.14872}
{M.V. Markin},	
\textit{On sufficient  and necessary conditions for linear hypercyclicity and chaos},	
arXiv:2106.14872.
\bibitem{MarkSich2019(1)}
{M.V. Markin and E.S. Sichel},	
\textit{On the non-hypercyclicity
of normal operators, their exponentials, and symmetric operators}, 
{Mathematics} 
\textbf{7}	
{(2019)},	
{no.~10},	
{Article no. 903},	
{8 pp.} 
\end{thebibliography}
\end{document}